\newcommand{\field}[1]{\mathbb{#1}}
\newcommand{\R}{\field{R}}
\newtheorem{theorem}{Theorem}
\newtheorem{remark}{Remark}[section]
\newcommand*{\rom}[1]{\expandafter\@slowromancap\romannumeral #1@}
\begin{document}

\graphicspath{{figures/}}

\begin{frontmatter}

\title{Stochastic Deep-Ritz for Parametric Uncertainty Quantification}


%
%
%
%

\author[1,2]{Ting Wang}
\author[2]{Jaroslaw Knap}

\address[1]{
  Booz Allen Hamilton Inc., McLean, VA 22102, USA}

\address[2]{
  Physical Modeling and Simulation Branch, DEVCOM Army Research Laboratory,
  Aberdeen Proving Ground, MD, 21005-5066, USA}

\begin{abstract}
  Scientific machine learning has become an increasingly important
  tool in materials science and engineering. It is particularly well
  suited to tackle material problems involving many variables or to
  allow rapid construction of surrogates of material models, to name
  just a few. Mathematically, many problems in materials science and
  engineering can be cast as variational problems. However, handling of
  uncertainty, ever present in materials, in the context of
  variational formulations remains challenging for scientific machine
  learning.  In this article, we propose a deep-learning-based
  numerical method for solving variational problems under
  uncertainty. Our approach seamlessly combines deep-learning
  approximation with Monte-Carlo sampling. The resulting numerical
  method is powerful yet remarkably simple. We assess its performance
  and accuracy on a number of variational problems.
\end{abstract}

\begin{keyword}
  Stochastic variational calculus, deep learning, uncertainty
  quantification, Monte Carlo, scientific machine learning
\end{keyword}

\end{frontmatter}



\section{Introduction}
Randomness and uncertainty are ubiquitous in materials science and
engineering. In order to facilitate analysis, and ultimately design,
of materials, it is essential to accurately quantify the uncertainty
in material models. A good case in point is additive manufacturing of
materials. Additive manufacturing commonly involves numerous sources
of uncertainty such as laser power, material composition or particle
size.  The uncertainty ultimately influences the material
microstructure and hence, in turn, affects the mechanical performance
of manufactured parts.  Therefore, it is critical to take the
uncertainty into account as to clear the way for robust additive
manufacturing.  From the mathematical modeling perspective, the
uncertainty is frequently modeled as the random input data in the form
of a random field $\kappa$ to a (stochastic) variational problem
\begin{equation}
  \label{eqn:abstract-energy}
  \min_{u} \mathbb{E}\left[\int_D I(x,  u,  \nabla u; \kappa) \,  dx   \right]
\end{equation}
for a Lagrangian $I$ depending on the random field $\kappa$.

Traditionally, solutions of~\eqref{eqn:abstract-energy} are sought by
first computing the associated strong form and then solving the
resulting differential equation by numerical methods for stochastic
differential equations. A great number of computational methodologies
have been developed to solve stochastic differential equations.  Among
them, stochastic collocation (SC), stochastic Galerkin (SG), and Monte
Carlo (MC)/quasi Monte Carlo (QMC) are the three mainstream
approaches.  SC aims to first solve the deterministic counterpart of a
stochastic differential equation on a set of collocation points and
then interpolate over the entire image space of the random
element~\cite{babuvska2007stochastic,nobile2008sparse,
  nobile2008anisotropic}.  Hence, the method is non-intrusive, meaning
that it can take advantage of existing solvers developed for
deterministic problems.  Similarly, MC is non-intrusive since it
relies on taking sample average over a set of deterministic solutions
computed from a set of realizations of the random
field~\cite{babuska2004galerkin, matthies2005galerkin,
  kuo2016application}.  In contrast, SG is considered as intrusive
since it requires a construction of discretization of both the
stochastic space and physical space simultaneously and, as a result,
it commonly tends to produce large systems of algebraic equations
whose solutions are needed~\cite{babuska2004galerkin,
  ghanem2003stochastic, gunzburger2014stochastic, xiu2002wiener,
  xiu2003modeling}. However, these algebraic systems differ
appreciably from their deterministic counterparts and thus existing
deterministic solvers can rarely be utilized.

It is important to stress that all traditional numerical methods for
stochastic differential equations suffer from the curse of
dimensionality and hence tend to be limited to low dimensional
problems (e.g., $D \subset \mathbb{R}^2$ or $D \subset \mathbb{R}^3$).
Recently, deep neural networks (DNN) have garnered some acceptance in
computational science and engineering, and deep-learning-based
computational methods have been universally recognized as potentially
capable of overcoming the curse of
dimensionality~\cite{grohs2018proof, poggio2017and}.  Depending on how
the loss function is formulated, deep-learning-based methods for
deterministic differential equations can broadly be classified into
three categories: 1) residual based minimization, 2) deep backward
stochastic differential equations (BSDE) and 3) energy based
minimization.  Both the physics informed neural networks
(PINNs)~\cite{raissi2019physics, raissi2017physics} and the deep
Galerkin methods (DGMs)~\cite{sirignano2018dgm} belong to the category
of residual minimization.  PINNs aim to minimize the total residual
loss based on a set of discrete data points in the domain $D$.
Similarly, DGMs rely on minimizing the loss function induced by the
$L^2$ error.  We also refer~\cite{berg2018unified, carleo2017solving}
for a similar approach.  In contrast, the deep-BSDE methods exploit
the intrinsic connection between PDEs and BSDEs in the form of the
nonlinear Feynman-Kac formula to recast a PDE as a stochastic control
problem that is solved by reinforcement learning
techniques~\cite{han2017deep,han2018solving, beck2019machine,
  han2016deep, nusken2021solving}.  See~\cite{nusken2021interpolating}
for a recent work on interpolating between PINNs and deep BSDEs.
Finally, energy methods construct the loss function by taking
advantage of the variational formulation of elliptic
PDEs~\cite{yu2018deep, khoo2019solving, khoo2021solving,
  muller2019deep}.  As a comparable approach,
in~\cite{li2022semigroup} the Dynkin formula for overdamped Langevin
dynamics is employed to construct a weighted variational loss
function. We refer an interested reader to~\cite{beck2020overview,
  weinan2021algorithms, weinan2020machine} for an in-depth overview of
deep-learning-based methods for solving PDEs.

Lately, extensions of the deep-learning approaches for PDEs with
random coefficients have been proposed.  For example, convolutional
neural networks mimicking image-to-image regression in computer vision
have been employed to learn the input-to-output mapping for parametric
PDEs~\cite{khoo2021solving,zhu2019physics, zhang2021bayesian}.
However, these methods, by design, are mesh-dependent and therefore
aim to discover a mapping between finite dimensional spaces.  In
contrast, operator learning methods, such as the deep operator network
(ONET)~\cite{lu2019deeponet} and the Fourier neural operator
(FNO)~\cite{li2020fourier}, aim at learning a mesh-free, infinite
dimensional operator with specialized network architectures.  Let us also point out that all
these methods tend to identify a mapping $u: \kappa \to u(\kappa)$,
i.e., from the random field to the solution.  In practice, the
sampling of $\kappa$ is often reliant on the Karhunen–Lo{\`e}ve
expansion to parameterize $\kappa$ by means of a finite dimensional
random vector $Z$.  Taking this fact into account, in this article we
propose to directly learn a mapping $u: (x, Z) \to u(x, Z)$, i.e.,
from the joint space of $x$ and $Z$ to the solution.  
We empirically show that our approach achieves comparable accuracy with traditional methods even with a generic fully connected neural network architecture.
More importantly, however,
ONET and FNO are supervised by design and rely on the mean square loss
function to facilitate training.  Such a loss function rarely takes
the underlying physics into account in a
direct manner and customarily depends on sampling of the solution
throughout the domain.
In order to sidestep the above difficulties, we instead
employ~\eqref{eqn:abstract-energy} directly.  We emphasize that our
approach possesses a fundamental advantage over the approaches reliant
on stochastic differential equations.  Namely, after approximating the
solution $u$ by a DNN, the functional provides a natural loss
function. Hence, the physical laws encoded
in~\eqref{eqn:abstract-energy} are seamlessly incorporated when a
minimizer is computed.  Our method can be considered as an extension
of the deep-Ritz method in~\cite{yu2018deep} to the stochastic setting
and henceforth we refer to it as the stochastic deep-Ritz method.

The remainder of the article is organized as follows.
Section~\ref{sec:model} describes the general framework for solving
the stochastic variational problem by means of DNN. In
Section~\ref{sec:stochastic-deep-Ritz}, we propose the DNN based
approach and present the stochastic deep-Ritz method.  Finally,
numerical benchmarks are presented in Section~\ref{sec:example}.
 
\section{The stochastic variational problem}\label{sec:model}

Let $D \subset \mathbb{R}^d$ be a bounded open set with a Lipschitz
boundary $\partial D$ and $(\Omega, \mathcal{F}, \mathbb{P})$ be a probability
space, where $\Omega$ is the sample space,
$\mathcal{F} \subset 2^{\Omega}$ is a $\sigma$-algebra of all possible
events and $\mathbb{P}: \mathcal{F} \to [0, 1]$ is a probability
measure.  We consider functionals of the type 
\begin{equation}
\label{eq:sochastic_functional}
   J(u) = \mathbb{E}\left[\int_D I(x, u, \nabla u; \kappa(x,  \omega))\, dx\right]
\end{equation}
where $I:\R^d \times \R^N \times \R^{d\times N} \to \R$ is the Lagrangian,
$u : \bar{D} \times \Omega \to \R^N$ is the solution of interest,  and
$\kappa: \bar{D} \times \Omega \to \R$ is the random input field.  
Here $\bar{D} = D \cup \partial D$
denotes the closure of $D$ in $\R^d$ and $\nabla$ is the gradient with
respect to the spatial coordinate $x$. We suppose that the Lagrangian
$I$ is of class $C^1$ on an open subset of
$\R^d \times \R^N \times \R^{d \times N}$.  Moreover, we make the usual
assumption that the random fields $\kappa$ is parameterized by a
finite dimensional random vector
$Z = (Z_1, \ldots, Z_K): \Omega \to \Gamma \subset \mathbb{R}^K$,
i.e., there exists finitely many uncorrelated random variables
$Z_1, \ldots, Z_K$ such that the random field is of the form
\[\kappa(x, \omega) = \kappa(x, Z(\omega))\]
and hence~\eqref{eq:sochastic_functional} reduces to 
\begin{equation}
\label{eq:sochastic_functional_finite_dim}
   J(u) = \mathbb{E}_Z\left[\int_D I(x, u, \nabla u; \kappa(x,  Z))\, dx\right],
\end{equation}
where the expectation is taken with respect to the random vector $Z$.
We are interested in solving the following stochastic variation problems
\begin{equation}
\label{eqn:stochastic-var-form}
\min_{u \in \mathcal{U}} J(u)
\end{equation}
over a suitable functional space $\mathcal{U}$. There exists a
substantial body of literature dedicated to studying the existence of
solutions
of~\eqref{eqn:stochastic-var-form}~(see, for example, \cite{giaquinta2013calculus,dacorogna2007direct}). We
direct the reader to Theorem~\ref{thm:existence-and-uniqueness} in the appendix where we derive the conditions for
existence of such solutions for a quadratic Lagrangian.

Numerical solutions of~\eqref{eqn:stochastic-var-form} are notoriously
difficult to obtain. While many techniques have been proposed, they
primarily rely on, first, reducing the minimization
problem~\eqref{eqn:stochastic-var-form} to a random input PDE
and, second, numerically solving the
resulting random input PDE by classical techniques such as stochastic collocation
(SC)~\cite{babuvska2007stochastic, nobile2008sparse,
  nobile2008anisotropic}, stochastic Galerkin (SG)
\cite{ghanem2003stochastic, babuska2004galerkin,
  gunzburger2014stochastic, xiu2002wiener, xiu2003modeling} and Monte
Carlo (MC) \cite{babuska2004galerkin, matthies2005galerkin,
  kuo2016application}. In contrast to these techniques, we proposed
in~\cite{wang2021stochastic} an approach to seek solutions
of~\eqref{eqn:stochastic-var-form} directly by a combination of
polynomial chaos expansion (PCE) and stochastic gradient descent
(SGD). 
However,  the approach relies on the approximation over the tensor product space of $D$ and $\Omega$. 
In what follows,  we introduce a DNN-based method
which does not require such approximation over the tensor product space.

\section{Stochastic deep-Ritz: deep learning based UQ}\label{sec:stochastic-deep-Ritz}
\subsection{Preliminaries on neural networks}
Let us now briefly review some preliminaries on DNN and set up the
finite dimensional functional space induced by DNN.  A standard
fully-connected DNN of depth $L > 2$ with input dimension $N_0$ and
output dimension $N_L$ is determined by a tuple
\[
\Phi = \{(T_1, \sigma_1), \ldots, (T_L, \sigma_L)\},
\]
where
$T_l(x) = W_l x + b_l$ is an affine transformation with weight matrix $W_l \in \mathbb{R}^{N_{l-1} \times N_l}$ and bias vector $b_l \in \mathbb{R}^{N_l}$ and $\sigma_l : \mathbb{R}^{N_l} \to \mathbb{R}^{N_l}$ is a nonlinear activation function.
We say the DNN $\Phi$ is of width $W$ if $W =  \max_{l=1, \ldots, L} N_l$. 
We denote
\[\theta = (W_1, b_1, \ldots, W_L, b_L)\]
the weight parameters of $\Phi$
and 
\[
\sigma = (\sigma_1,  \ldots,  \sigma_L)
\]
the set of activation functions of $\Phi$.
For a fixed set of activation functions $\sigma$,  the DNN $\Phi$ is completely determined by the weight parameters $\theta$ and hence in the sequel
we write $\Phi = \Phi_{\theta}$ to emphasize the dependence on $\theta$. 
Each DNN $\Phi_{\theta}$ induces a function 
\[
u_{\theta} = R(\Phi_{\theta}) : \mathbb{R}^{N_0} \to  \mathbb{R}^{N_L},
\quad x \mapsto \sigma_L \circ T_L \circ \ldots \circ \sigma_1 \circ T_1.
\]
We refer to $R(\Phi_{\theta})$ as the realization of the DNN $\Phi_{\theta}$. 
The set of all DNNs of depth $L$ and width $W$,  denoted by
$\mathcal{N}_{L , W}$, induces a functional space $\mathcal{U}_{L, W}$
consisting of all realizations of DNNs contained in
 $\mathcal{N}_{L , W}$, i.e.,
\[
\mathcal{U}_{L , W} = \left\{ u_{\theta}: \mathbb{R}^{N_0} \to  \mathbb{R}^{N_L}~|~ u_{\theta} = R(\Phi_{\theta}) ,  \Phi_{\theta} \in \mathcal{N}_{L , W} \right\}.
\]
Approximating a function $u$ by DNNs in $\mathcal{N}_{L , W}$ amounts
to selecting a realization $u_{\theta} \in \mathcal{U}_{L , W}$
minimizing an appropriate loss functions.  Therefore, a DNN along with its
realization provide together a surrogate over the functional
space $\mathcal{U}_{L , W}$.

\subsection{The stochastic deep-Ritz method}
We are now in a position to discuss solving the stochastic
variational problem~\eqref{eqn:stochastic-var-form} with DNN. To
avoid the deterministic approximation of the integration over the
physical domain $D$, we introduce a $d$-dimensional random vector $X$ with the
uniform density function $\mu(x)$ supported on $D$ and
rewrite~\eqref{eqn:stochastic-var-form} into the following fully
probabilistic form:
\begin{equation}\label{eqn:fully-prob-form}
\min_{u \in \mathcal{U}} J(u) \quad \textrm{with}\quad
J(u) =
\mathbb{E}\left[I(X, u(X,Z), \nabla u(X,Z); \kappa(X, Z)) \right],
\end{equation}
where the expectation $\mathbb{E}$ is now taken with respect to both
$X$ and $Z$ and we implicitly assume that the volume of $D$ with
respect to $\mu$ is normalized to $1$, i.e., $\int_D \mu(x)\, dx = 1$.

Next, we seek an approximate solution of the
problem~\eqref{eqn:fully-prob-form} in the neural network space
$\mathcal{U}_{L , W}$.  The functional in~\eqref{eqn:fully-prob-form}
provides a natural loss function that can be optimized with DNN using
SGD methods.  Specifically, we consider a class of DNNs
$\Phi_{\theta}$ (parameterized by $\theta$) with input dimension
$d + K$ and output dimension $1$ whose realizations approximate the
true solution $u(x, z)$, i.e.,
\[
u(x,  z)  \approx u_{\theta}(x,  z) = R(\Phi_{\theta})(x,  z),
\]
through minimizing the functional~\eqref{eqn:fully-prob-form}.
This leads to the following finite dimensional optimization problem:
\[
\min_{u_\theta \in \mathcal{U}_{L, W}} J(u_\theta) \quad \textrm{with}\quad
J(u_\theta) =
\mathbb{E}\left[ I(X, u_\theta(X,Z), \nabla u_\theta(X,Z); \kappa(X, Z)) \right],
\]
where the optimization is over the space $\mathcal{U}_{L, W}$, i.e.,
the function space induced by the realizations of DNNs with depth $L$
and width $W$.  The fully-connected DNN architecture that we employ in
this work is schematically represented in Figure~\ref{fig:NN}.
Although there exist more sophisticated NN architectures, e.g.,
residual NN~\cite{he2016deep} and Unet~\cite{ronneberger2015u},  we demonstrate in
Section~\ref{sec:example} that a rather simple fully-connected DNN is
capable of learning the solution with high accuracy.

Unlike the finite element method, which takes the boundary conditions
into account by design,  hard-constraint boundary conditions 
for DNN-based methods can only be enforced for simple geometry of $D$~\cite{lu2021physics, sun2020surrogate, muller2021notes}. 
Therefore,  we enforce the soft-constraint boundary condition by introducing a penalty term to compensate for the boundary condition and
obtain the final loss function:
\begin{equation}\label{eqn:stochastic-optim-problem}
\min_{u_{\theta} \in \mathcal{U}_{L, W}}J(u_{\theta}) 
\end{equation}
with 
\[
  J(u_{\theta})
  = \mathbb{E}\left[ I(X, u_\theta(X,Z), \nabla u_\theta(X,Z); \kappa(X, Z)) \right]
  + \beta  \mathbb{E}\left[ |u_{\theta}(S, Z)|^2\right],
\]
where $\beta$ is the penalty coefficient and $S$ is a random vector
with a uniform distribution over the boundary $\partial D$.  Here, the
first and second expectations are taken with respect to $(X, Z)$ and
$(S, Z)$, respectively. We use the same $\mathbb{E}$ to denote
the two expectations for notational simplicity.  
It should be emphasized that, other than the DNN approximation error,  the penalty term often introduces an additional source of errors
into the problem and hence changes the ground truth solution.
However,  we expect that the soft constraint error is negligible compared to the DNN approximation error for large penalty $\beta$.  
The stochastic
optimization problem~\eqref{eqn:stochastic-optim-problem} can be naturally solved by the SGD algorithm or its variants and the final
stochastic deep-Ritz algorithm is presented in
Algorithm~\ref{alg:stochastic-deep-Ritz}.

\begin{figure}
\centering
\includegraphics[scale=1.0]{./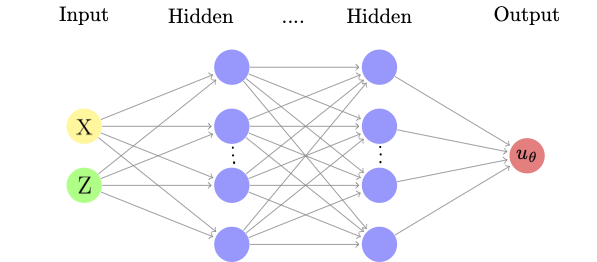}
\caption{The fully-connected neural network architecture employed in
  the stochastic deep-Ritz solver.}
\label{fig:NN}
\end{figure}

\begin{algorithm}[!ht]
\caption{The stochastic deep-Ritz algorithm}\label{alg:stochastic-deep-Ritz}
\textbf{Input}: A fully-connected DNN $\Phi_{\theta}$; number of
training iterations $N_e$; mini-batch size $M$; penalty coefficient $\beta$;
learning rate $\eta_n$;\\
\textbf{Output}: A DNN surrogate $\Phi_{\theta}$ with realization
$u_{\theta}$.
\begin{algorithmic}[1]
\For{$n = 1 : N_e$}
  \State Generate a minibatch of samples $(X_m,  S_m,  Z_m)$ for $m = 1, \ldots, M$,
  \State Evaluate the NN realizations $u_{\theta}(X_m, Z_m)$ and $u_{\theta}(S_m, Z_m)$,
  \State Compute the stochastic gradient estimator $g(\theta_n; X_m, S_m, Z_m)$ such that
  \[\mathbb{E}\left[   g(\theta_n; X_m, S_m, Z_m)   \right] = \nabla_{\theta} J(\theta_n),  \quad m = 1, \ldots, M, \]
  \State Compute the mini-batch average: 
  \[\bar{g}(\theta_n) = \frac{1}{M} \sum_{m=1}^{M}  g(\theta_n; X_m, S_m, Z_m), \]
  \State Update the DNN parameters: $\theta_n \gets \theta_n - \eta_n \bar{g}(\theta_n)$.
\EndFor
\end{algorithmic}
\end{algorithm}
\begin{remark} Some remarks on
  Algorithm~\ref{alg:stochastic-deep-Ritz} are in order.
\begin{enumerate}
\item A natural choice for the stochastic gradient estimator is 
  \[
    g(\theta; X, S, Z) =
    \nabla_{\theta} \left\{ I(X, u_\theta(X,Z), \nabla u_\theta(X,Z); \kappa(X, Z))
      + \beta \left|u_{\theta}(S, Z)\right|^2\right\},
\]
which can be easily obtained by automatic differentiation in any deep
learning frameworks.  However, it is possible to construct more
efficient estimators through variance reduction
techniques~\cite{asmussen2007stochastic, glasserman2004monte}. This
will be the focus of another work.
\item The numerical consistency of the stochastic deep-Ritz method,
  i.e., the minimizer of $J(u_{\theta})$ over $\mathcal{U}_{L, W}$
  converges (weakly) to the minimizer of $J(u)$ over $\mathcal{U}$,
  relies on justifying the $\Gamma$-convergence when either the depth
  $L \to \infty$ or the width $W \to \infty$ (see e.g.,
  ~\cite{struwe1990variational}).  We refer interested reader
  to~\cite{muller2019deep} for a proof in the deterministic setting.
\item In practice, more efficient SGD variants such as Adam can be
  used to expedite the training of NNs~\cite{kingma2014adam}.
\end{enumerate}
\end{remark}

It should be emphasized that the stochastic deep-Ritz method learns a
deterministic function $u_{\theta}$ mapping the random variables $X$
and $Z$ to the random variable $u_{\theta}(X, Z)$.  Similar
applications of DNNs are common in deep learning.  For instance, the
generator of a GAN learns a mapping from Gaussian samples to samples
from the desired distribution.  Similarly,  the DNN in our method takes
samples from $(X, Z)$ and learns how to generate samples from
$u_{\theta}(X, Z)$, that is, the DNN learns the transformation between
the input $(X, Z)$ and solution $u_{\theta}(X, Z)$.  In comparison with the traditional
approaches, e.g., SC and PC, which obtain an explicit approximation to
the solution, the proposed methodology should be considered as an
implicit method since it only allows us to learn an approximated
simulator of the true solution $u(X, Z)$.  However,  note that for stochastic problems one is often not interested in the analytical solutions but rather the statistical properties (e.g., moments, distribution) of the solution. 
In practice, all statistical properties of the solution can be easily
computed through sampling directly from the learned neural DNN.

\section{Numerical experiments}\label{sec:example}
In order to demonstrate the efficacy and accuracy
of the proposed method,  we assess its numerical performance by testing it on a class of stochastic variational problems with quadratic Lagrangian, i.e., 
\begin{equation}\label{eqn:Ritz-form}
\min_{u \in \mathcal{U}}  J(u) \quad \textrm{with}\quad J(u) =  \mathbb{E}\left[\int_D \frac{1}{2}\kappa(x, Z) |\nabla u(x, Z)|^2  - f(x, Z) u(x, Z)\, dx\right].
\end{equation}
Note that the Euler-Lagrange form of above problem corresponds to the following class of random elliptic PDEs 
\begin{equation}\label{eqn:PDE-finite-noise}
  \begin{aligned}
    -\nabla \cdot \left(\kappa(x, Z(\omega)) \nabla u(x,  Z(\omega))\right) &= f(x, Z(\omega)), & \text{in~} & D,\\
    u(x, Z(\omega)) &= 0, & \text{on~} & \partial D.
  \end{aligned}
\end{equation}
For completeness,  we provide in the appendix the theoretical justification that~\eqref{eqn:Ritz-form} admits a unique minimizer which satisfies the stochastic  weak form~\eqref{eqn:weak-form} of~\eqref{eqn:PDE-finite-noise}.

We measure the accuracy of the DNN realization
$u_{\theta}(x, Z)$ using the relative $L^2$ mean error
\[
  E(\theta) = \frac{\mathbb{E}[ ||u_{\theta}(X, Z) - u(X, Z)||^2
    ]}{\mathbb{E}[ ||u(X, Z)||^2]}.
\]
For all the numerical examples considered in this section, we use the
fully-connected DNN architecture (see~Figure~\ref{fig:NN}) with the
$\tanh$ activation function and apply the Adam
optimizer~\cite{kingma2014adam} to train the DNN.  
More specialized DNN architectures for solving the stochastic variational problem will be the focus of another work.  
All numerical
experiments are implemented with the deep learning framework
PyTorch~\cite{paszke2017automatic} on a Tesla V100-SXM2-16GB GPU.

\subsection{A one-dimensional problem}\label{sec:1d_problem}
In the first example, we test the performance of the stochastic
deep-Ritz solver by considering the one-dimensional Lagrangian ($d=N=1$)
\begin{equation}
  \label{eqn:1d-Lagrangian}
  I(x, u, u'; \kappa) = \frac{1}{2} \kappa(x,Z) (u'(x,Z))^2.
\end{equation}
We take $D$ to be the open interval $(-1,1)$ in $\R$.  The boundary
conditions for $u$ on $\partial D = \{-1, +1\}$ are $u(-1, Z) = 0$ and
$u(+1, Z) = 1$.  The random field $\kappa(x, Z)$ is taken to be
log-normal of the form $\kappa = \mathrm{e}^{\beta V(x, Z)}$ with
$\beta = 0.1$, where the potential $V$ is a nonlinear function of the
random vector $Z = (A_1, \ldots, A_n, B_1, \ldots, B_n)$, namely,
$$V(x, Z) = \frac{1}{\sqrt{n}} \sum_{k=1}^{n} A_k \cos(\pi k x) + B_k \sin(\pi k x)$$ and $A_k$ and $B_k$
are independent unit-normal random variables. One can easily verify
that $V(x, Z)$ is a Gaussian random field with zero mean and the
covariance kernel
$$\text{cov}(x_1, x_2) = \frac{1}{n}\sum_{k=1}^{n} \cos(\pi k (x_2 - x_1)).$$
It can be shown that the exact solution of the
problem~\eqref{eqn:stochastic-var-form} with the
Lagrangian~\eqref{eqn:1d-Lagrangian} is
$$u(x, Z) = \left(\int_{-1}^{1} \frac{1}{\kappa(\xi, Z)}\, d\xi\right)^{-1} \int_{-1}^{x} \frac{1}{\kappa(\xi, Z)}\, d\xi$$

We test the algorithm for this problem with $n = 5$ (i.e., $10$ dimensional random vector).
We use a fully-connected DNN with $L = 5$
layers, $W = 256$ nodes per layer and tanh activation function. To
account for the boundary condition, we set the penalty coefficient to
be $\beta = 50$.  For the training of the DNN, we run the solver for
$4\times 10^5$ iterations with a minibatch size $M = 2560$ per iteration.  The
initial learning rate is set to $\eta = 10^{-3}$ and is reduced by a
factor of $10$ after every $10^5$ iterations.  At the end of the training,
the final relative $L^2$ mean error is $E(\theta) = 0.52\%$.  

We compare our method with the ONET developed in~\cite{lu2019deeponet}. 
In contrast, the ONET is a data driven method. 
We train the ONET based on a data set consists of $10^4$ training samples. 
The branch component of the ONET has $100$ units at the input layer and $20$ units at the output layer with $3$ $128$-unit hidden layers in the middle. 
The trunk component of the ONET has a single unit input layer,  $20$-unit output layer 
and $3$ hidden layers with $128$ units per layer. The choice of the minibatch size, number of iterations and learning rate are the same as that of the stochastic deep-Ritz. 


We compare the one-dimensional marginal distributions (i.e., $u(x, Z)$ at different $x$) of the stochastic deep-Ritz solution, ONET solution and the exact
solution in Figure~\ref{fig:1d_field}.  The results
demonstrate that our method approximates the true random field
solution fairly accurately.  In contrast, the ONET fails to capture the tail behaviors of the distribution.

\begin{figure}[h]
\centering
\includegraphics[width=1.0\linewidth]{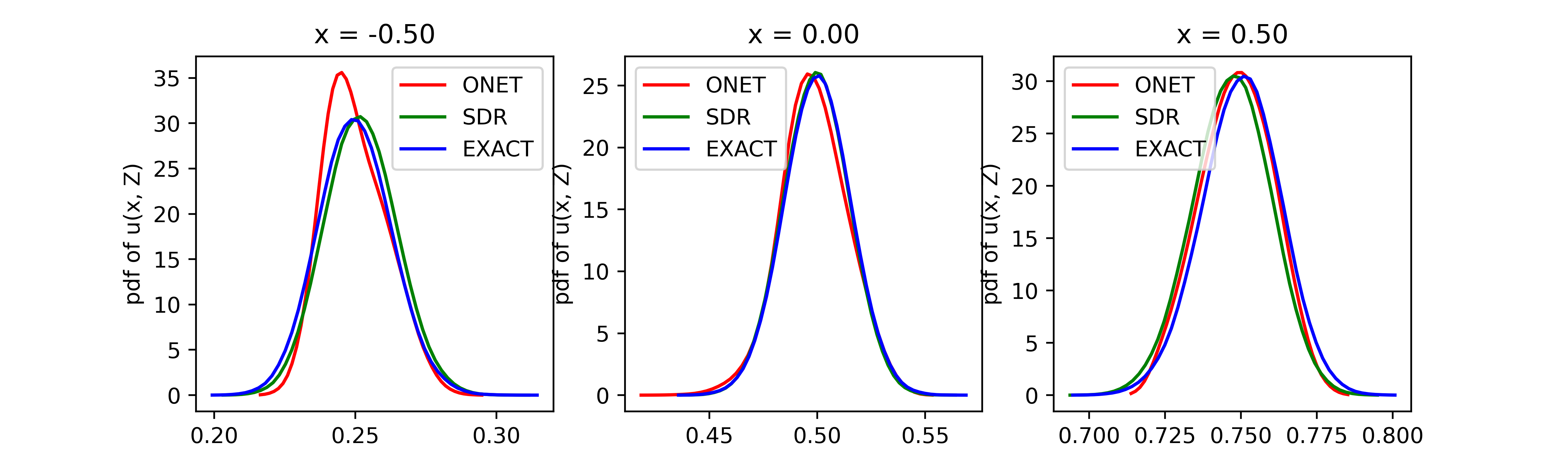}
\caption{Marginal distributions of the solution to the 1d Dirichlet problem in Section~\ref{sec:1d_problem}. Left: the pdf of $u_{\theta}(x, Z)$ at $x=-0.5$; Center: the pdf of $u_{\theta}(x, Z)$ at $x=0$; Right: the pdf of $u_{\theta}(x, Z)$ at $x=0.5$. }
\label{fig:1d_field}
\end{figure}

Finally, we visualize the joint density for $(u_{\theta}(-0.5, Z), u_{\theta}(0.5, Z))$ in Figure~\ref{fig:1d_field_joint_pdf}, which verifies that the approximated solution $u_{\theta}(x, Z)$ captures the correct correlation of the random field at different $x$.

\begin{figure}[ht!]
\centering
\includegraphics[width=0.9\linewidth]{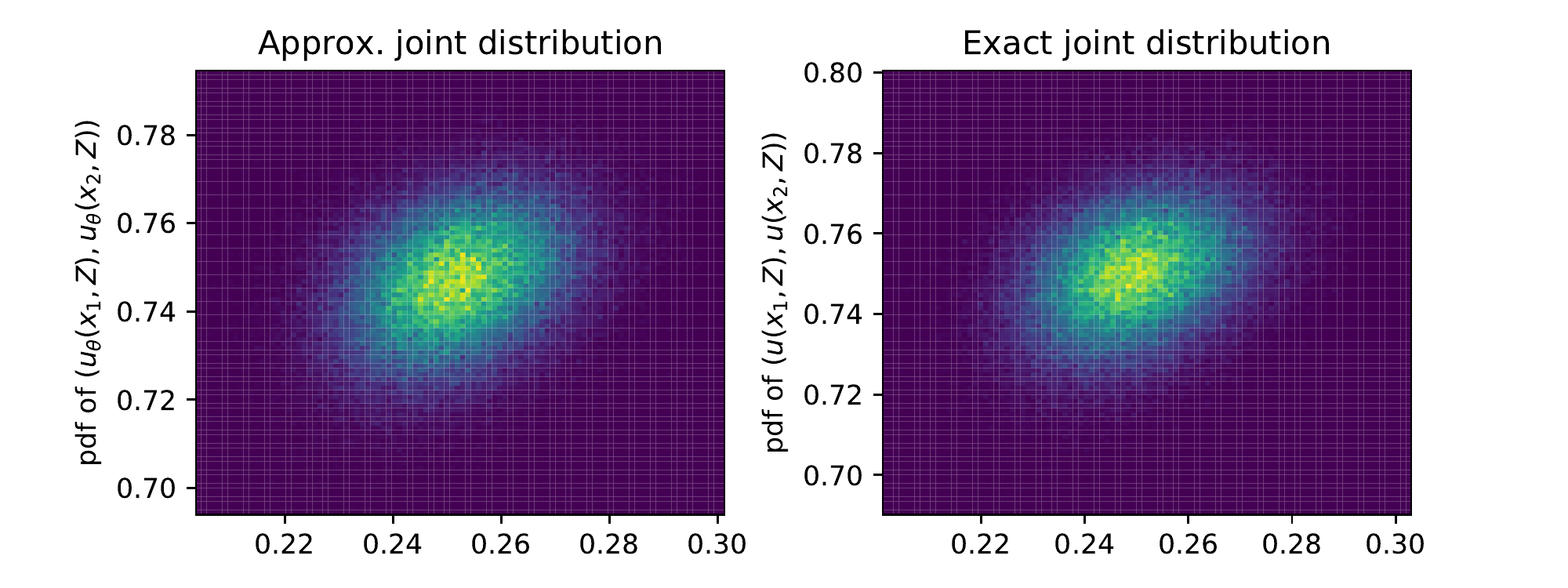}
\caption{Joint distribution of the solution to the 1d Dirichlet problem in Section~\ref{sec:1d_problem}. Left: the approximated joint pdf of $u_{\theta}(-0.5, Z)$ and $u_{\theta}(0.5, Z)$; Right: the exact joint pdf $u(-0.5, Z)$ and $u(0.5, Z)$.}
\label{fig:1d_field_joint_pdf}
\end{figure}

\subsection{A quadratic Lagrangian with Neumann boundary condition}\label{sec:Neumann_BC}
Next, we consider the following Lagrangian
\begin{equation}
  \label{eqn:semilinear_Lagrangian}
  I(x,u,\nabla u; \kappa) = \frac{1}{2} \kappa(x, Z) |\nabla u(x, Z)|^2 - f(x, u(x, Z), Z)\ u(x, Z)
\end{equation}
While we take $d$ arbitrary for now, $N = 1$ is assumed. The physical
domain is $D = [0, 1]^d$. We impose the following Neumann boundary
conditions on $\partial D$
\[
\frac{\partial u}{\partial n}(x, Z) = 0.
\]
The random field $\kappa$ is taken to be
\[\kappa(x, Z) = d+ 1 + \sum_{i=1}^d Z_i\]
with $Z = (Z_1, \ldots, Z_d)$ uniformly distributed over $D$ and the
semilinear term $f$ is given by
\[
  f(x, u(x, Z), Z) = -\pi^2 (d+ 1 + \sum_{i=1}^d Z_i)\ u(x, Z) +
  2\pi^2 \sum_{i=1}^d\cos(\pi x_i).
\]
It can be easily verified that the unique solution of the minimization
problem~\eqref{eqn:stochastic-var-form} with the
Lagrangian~\eqref{eqn:semilinear_Lagrangian} is
$$
u(x, Z) = \frac{1}{\kappa(x, Z)} \sum_{i=1}^d\cos(\pi x_i).
$$
To solve the above minimization problem numerically, we approximate
the solution $u(X, Z)$ by a DNN realization $u_{\theta}(X, Z)$ and
formulate the following variational problem:
\[
\min_{\theta}J(\theta) \quad \textrm{with} \quad J(\theta) = \mathbb{E}\left[\frac{1}{2}\left( \kappa(X, Z) |\nabla u_{\theta}(X, Z)|^2 - f(X, u_{\theta}(X, Z), Z) u_{\theta}(X, Z)\right)
\right].
\]
Note that this problem does not require a penalty term and hence the ground truth remains unchanged.

In the numerical experiment with $d=2$, we use a $L = 5$ layer
fully-connected DNN with $W = 32$ nodes in each layer and $\tanh$
activation. We train the DNN for $3\times 10^5$ iterations with a
minibatch size $2560$ per iteration. The initial learning rate is set to
$\eta = 10^{-4}$ and is decayed by a factor of $0.1$ after every
$10^5$ iterations.  We test the accuracy of the trained DNN solution by
computing the the relative $L^2$ mean error using a Monte Carlo
integration.  Based on $10^4$ test samples, the final relative $L^2$
mean error is $E(\theta) = 0.65\%$.  The decays of both the loss
function $J(\theta)$ and the relative $L^2$ mean error $E(\theta)$
with respect to the number of iterations are visualized in
Figure~\ref{fig:Neumann-convergence}, which demonstrates a consistent
convergence behavior of $J(\theta)$ and $E(\theta)$.  These results
confirm that minimizing the loss $J(\theta)$ leads to a good
approximation to the true solution.

\begin{figure}[ht!]
\centering
\includegraphics[width=0.8\linewidth]{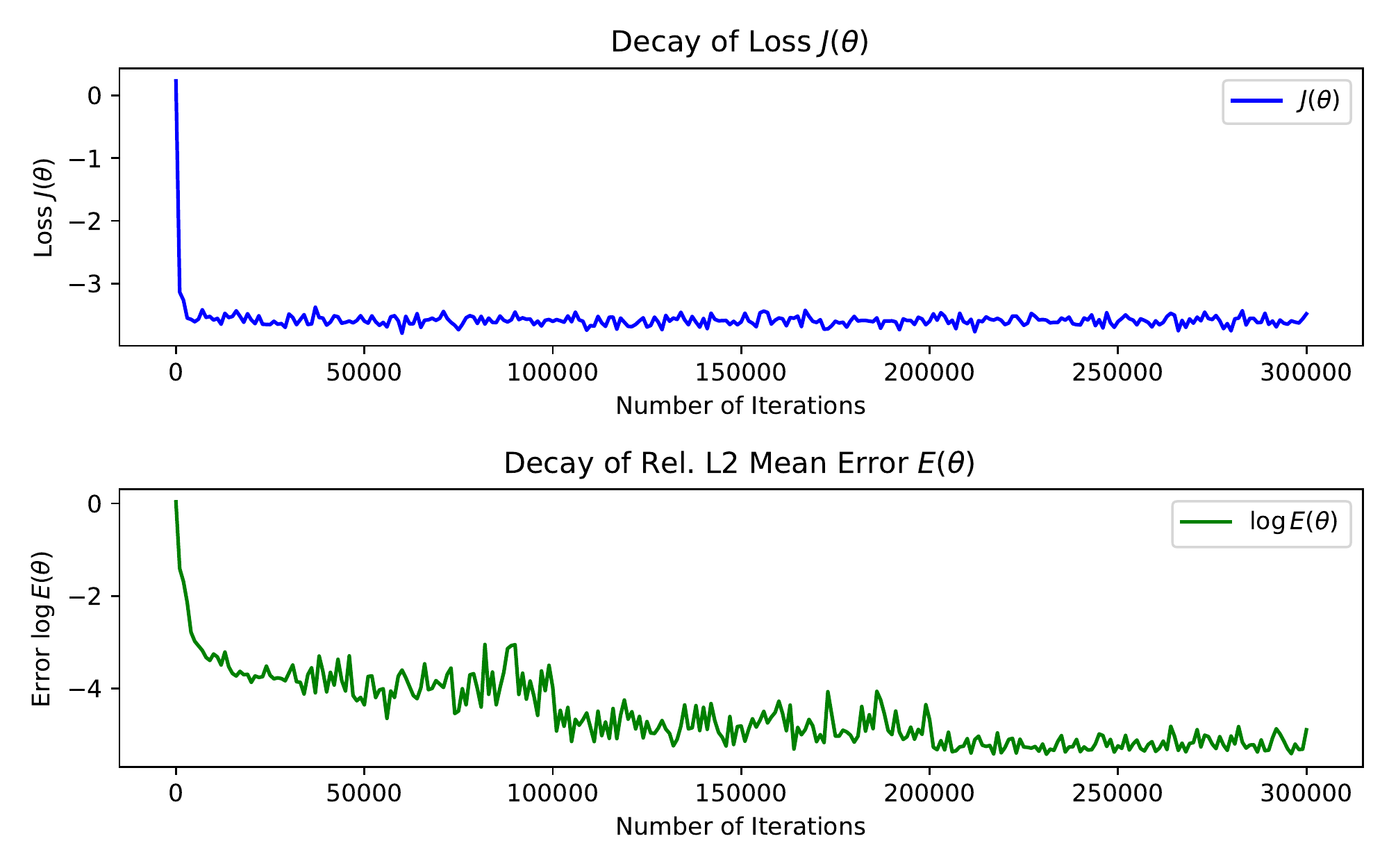}
\caption{Convergence of the stochastic deep-Ritz applied to the Neumann problem in Section~\ref{sec:Neumann_BC}.  {\bf Top}: decay of the variational loss.
 {\bf Bottom}: decay of the relative $L^2$ mean error (in log scale).}
\label{fig:Neumann-convergence}
\end{figure}

To demonstrate the distributional behavior of the DNN solution
$u_{\theta}(X, Z)$, we compare its distribution to the distribution of
the true solution $u(X, Z)$ at various values of $X$.  The probability
density functions (PDF) are shown in
Figure~\ref{fig:Neumann-nonlinear}. These results clearly indicate
that the stochastic deep-Ritz solver is capable of capturing the
distributional property of the true random field.

\begin{figure}[ht!]
\centering
\includegraphics[width=1.0\linewidth]{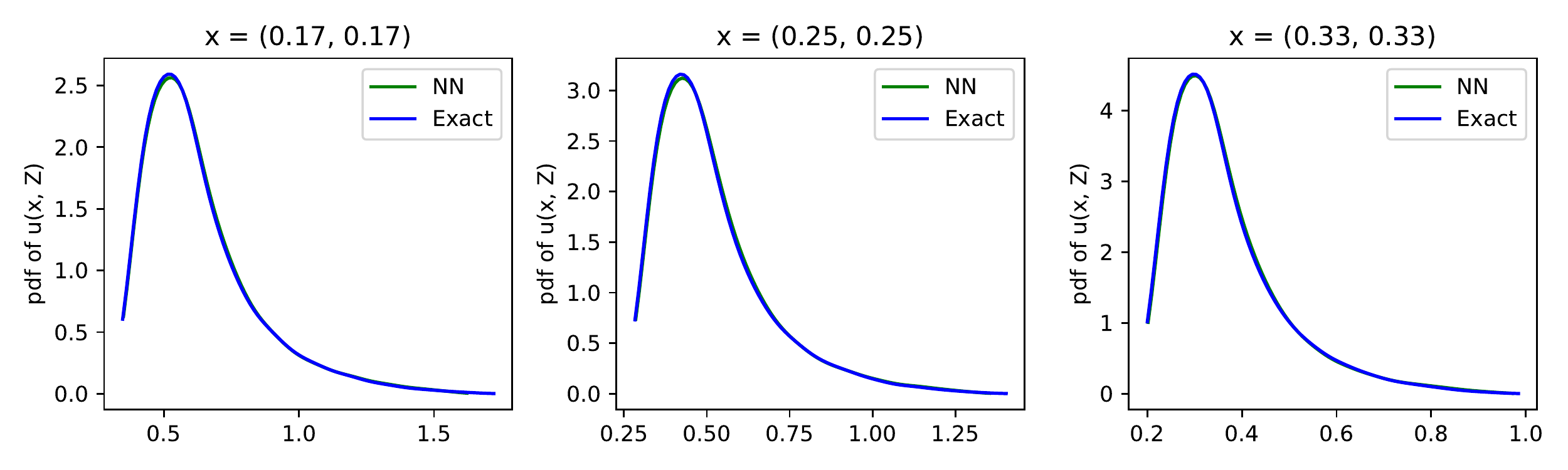}
\caption{Marginal distributions of the solution to the $2$ dimensional Neumann problem in Section~\ref{sec:Neumann_BC}. Left: the pdf of $u_{\theta}(x, Z)$ at $x=(1/6, 1/6)$; Center: the pdf of $u_{\theta}(x, Z)$ at $x=(1/4, 1/4)$; Right: the pdf of $u_{\theta}(x, Z)$ at $x=(1/3, 1/3)$.}
\label{fig:Neumann-nonlinear}
\end{figure}

We conclude this subsection by applying the stochastic deep-Ritz
method to the Neumann problem~\eqref{eqn:semilinear_Lagrangian} in $d=10$
dimensions.  We employ the same DNN architecture, number of iterations and minibatch size as that for the case $d=2$.  We set the
initial learning rate to $\eta = 10^{-5}$ and decay it after every
$10^5$ iterations by a factor of 10. The final relative
$L^2$ mean error $E(\theta)$ turns out to be $0.93\%$ and the
numerical results for the marginal distributions
$u_{\theta}(\cdot, Z)$ are presented in
Figure~\ref{fig:10D-Neumann-nonlinear}.  The results confirm that,
with the same computational cost, the stochastic deep-Ritz method
maintains the desired accuracy for high dimensional problems, a clear
indication that the method may have the potential to overcome the
curse of dimensionality.

\begin{figure}[ht!]
\centering
\includegraphics[width=1.0\linewidth]{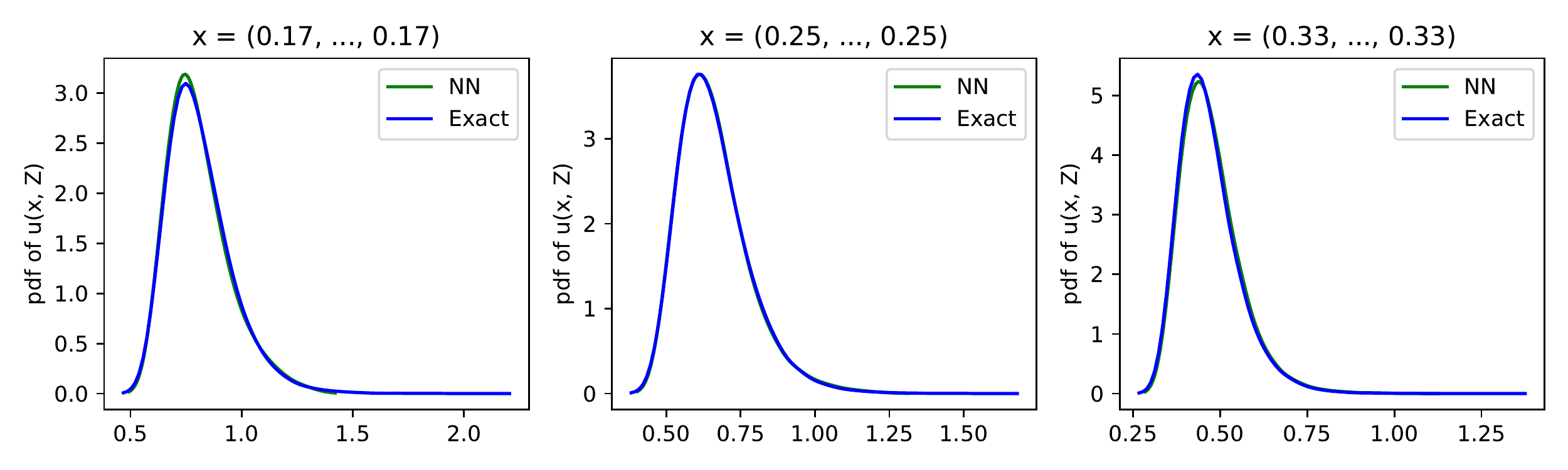}
\caption{Marginal distributions of the solution to the $10$ dimensional Neumann problem in Section~\ref{sec:Neumann_BC}.  Left: the pdf of $u_{\theta}(x, Z)$ at $x=(1/6, \ldots, 1/6) \in \mathbb{R}^{10}$; Center: the pdf of $u_{\theta}(x, Z)$ at $x=(1/4, \ldots, 1/4) \in \mathbb{R}^{10}$; Right: the pdf of $u_{\theta}(x, Z)$ at $x=(1/3, \ldots, 1/3) \in \mathbb{R}^{10}$.}
\label{fig:10D-Neumann-nonlinear}
\end{figure}

\subsection{A quadratic Lagrangian with Dirichlet boundary condition}\label{sec:quadratic_Lagrangian}
In this numerical example, we employ the
Lagrangian~\eqref{eqn:semilinear_Lagrangian} with $d=2$, $N=1$ and
\[
  f(x, u(x, Z), Z) = 2\pi^2 \sin(\pi x_1) \sin(\pi x_2).
\]
In contrast to the previous example, we take a zero boundary condition
$u(x, Z) = 0$ on $\partial D$. Here, the random field is defined as
\[\kappa(x, Z) = 3 + Z_1 + Z_2\]
with $Z = (Z_1, Z_2)$ uniformly distributed over $[-1, 1]^2$.
The problem admits the unique solution 
\[
u(x, Z) = \frac{1}{\kappa(x, Z)} \sin(\pi x_1) \sin(\pi x_2).
\]
In order to solve the problem using DNN, we seek a minimizer of the
loss function of the form~\eqref{eqn:stochastic-optim-problem}.

The numerical experiment is carried out with a $L = 5$ layer
fully-connected DNN with $W = 256$ nodes per layer and the tanh
activation function. The DNN is trained for $4 \times 10^5$ iterations
using $2560$ samples per minibatch for each iteration  We use a learning
rate scheduling with the initial learning rate $\eta = 10^{-3}$ and
decay the learning rate by a factor of $10$ after every $10^5$ iterations.
We plot the loss function $J(\theta)$ and the relative $L^2$ mean
error $E(\theta)$ in Figure~\ref{fig:Dirichlet-convergence_plot},
which again demonstrates that $J(\theta)$ and $E(\theta)$ decay in a
consistent manner.  Note that there is a clear decrease of $E(\theta)$
after the first $10^5$ iterations due to the learning rate scheduling.

\begin{figure}[htb]
\centering
\includegraphics[width=0.7\linewidth]{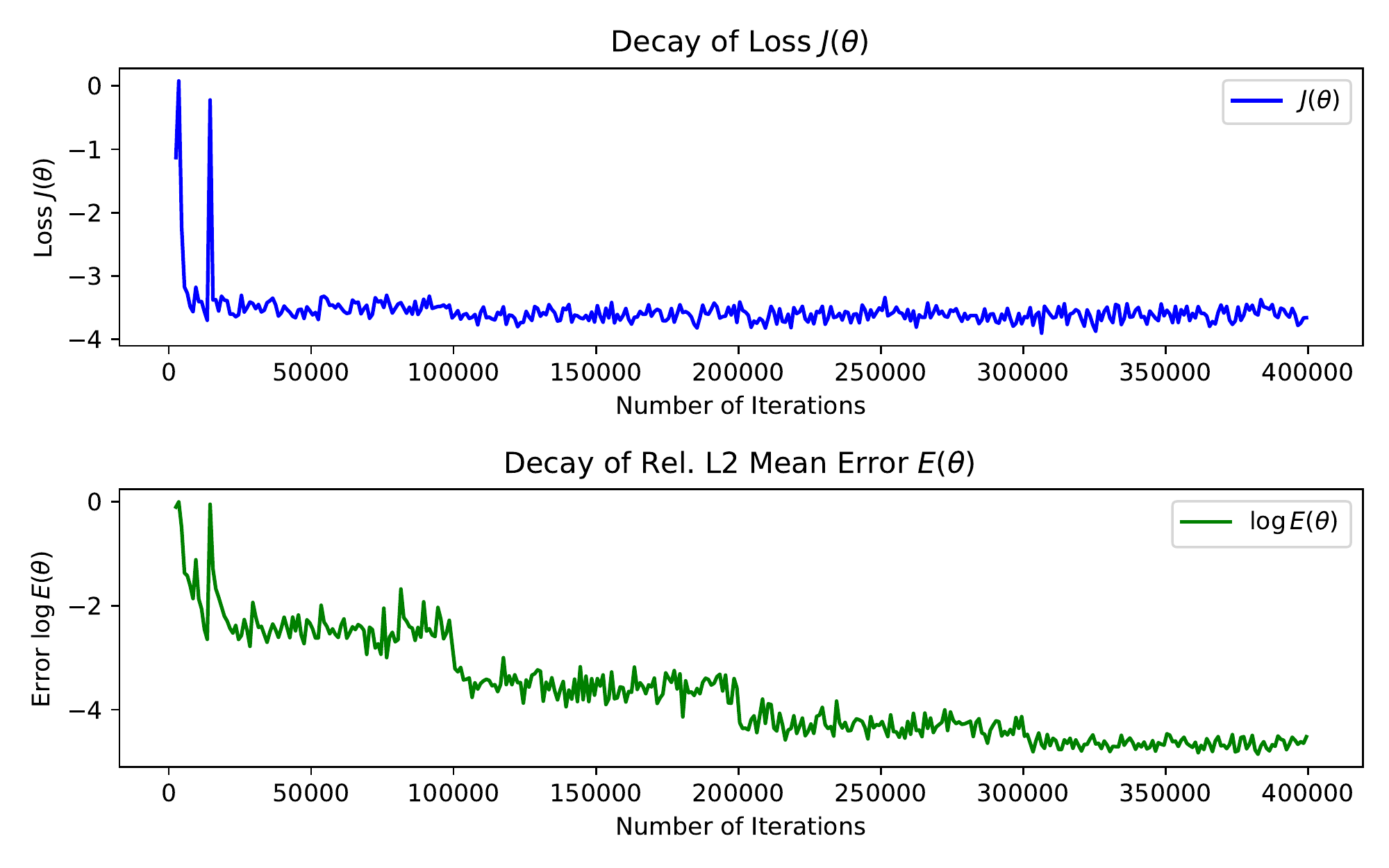}
\caption{Convergence of the stochastic deep-Ritz applied to the Dirichlet problem. The first $1000$ iterations is skipped in the plot. {\bf Top}: decay of the variational loss.
 {\bf Bottom}: decay of the relative $L^2$ mean error (in log scale).}
\label{fig:Dirichlet-convergence_plot}
\end{figure}

We gauge the accuracy of the DNN solution $u_{\theta}(x, Z)$ by
comparing it to the exact solution $u(x, Z)$.  With $10^5$ Monte Carlo
test samples, the relative $L^2$ mean error is $E(\theta) = 0.96\%$.
To further assess the accuracy of the approximated solution
$u_{\theta}(x, Z)$, we plot the marginal distribution and the joint
distribution of the random field in
Figure~\ref{fig:Dirichlet-nonlinear} and
Figure~\ref{fig:Dirichlet-joint-pdf}, respectively. The numerical
results suggest that the stochastic deep-Ritz method achieves a highly
accurate approximation to the true solution.


\begin{figure}[htb]
\centering
\includegraphics[width=1.0\linewidth]{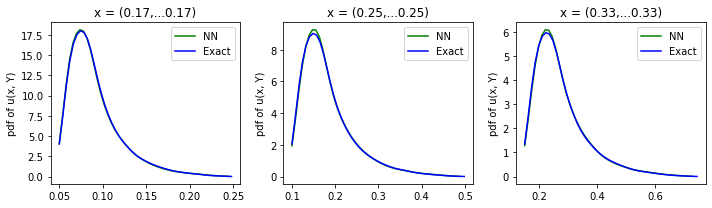}
\caption{Distributions of the solution to the Dirichlet problem in Section~\ref{sec:quadratic_Lagrangian} with penalty coefficient $\beta = 500$.  Left: the (unnormalized) pdf of $u_{\theta}(x, Z)$ at $x=(1/6, 1/6)$; Center: the (unnormalized) pdf of $u_{\theta}(x, Z)$ at $x=(1/4, 1/4)$; Right: the (unnormalized) pdf of $u_{\theta}(x, Z)$ at $x=(1/3, 1/3)$.}
\label{fig:Dirichlet-nonlinear}
\end{figure}

\begin{figure}[ht!]
\centering
\includegraphics[width=0.9\linewidth]{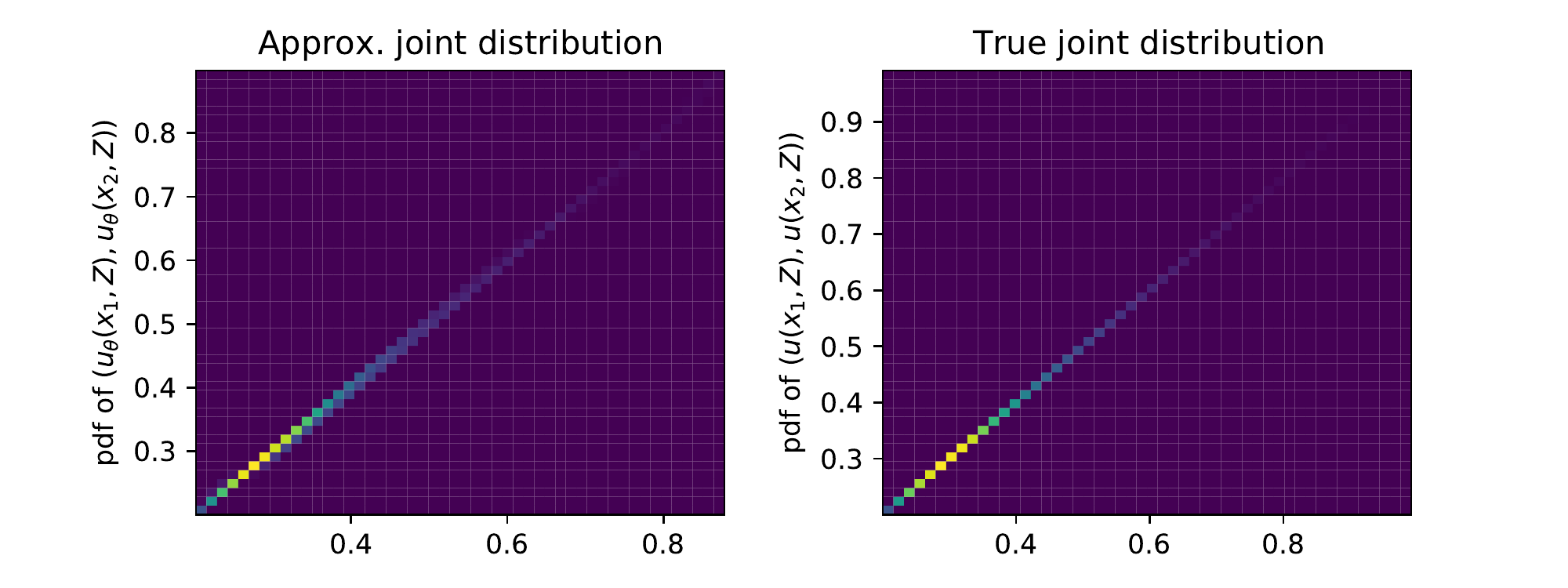}
\caption{Joint distribution of the solution to the Dirichlet problem in Section~\ref{sec:quadratic_Lagrangian} at $x_1=(-0.5, -0.5)$ and $x_2 = (0.5, 0.5)$. Left: the approximated joint pdf of $u_{\theta}(x_1, Z)$ and $u_{\theta}(x_2, Z)$; Right: the exact joint pdf $u(x_1, Z)$ and $u(x_2, Z)$.
}
\label{fig:Dirichlet-joint-pdf}
\end{figure}



%

\subsection{Overdamped Langevin dynamics with Dirichlet boundary condition}\label{sec:overdamped}
As the final test problem, we consider the
Lagrangian~\eqref{eqn:semilinear_Lagrangian} with $N=1$,
$f(x, Z) = -dZ$, and $\kappa(x, Z) = \textrm{e}^{-V(x, Z)}$, where
\[
  V(x, Z) = Z(1 + \|x\|^2)
\]
We take $D$ to be the $d$-dimensional unit ball $B(0,1)$. In addition, 
$u(x, Z) = \textrm{e}^{Z}$ on $\partial D$ is assumed.
The unique solution to the problem can be readily verified to be $u(x, Z) = \textrm{e}^{V}$.


Stochastic minimization of the loss function $J(\theta)$ associated
with this problem involves uniform sampling in $B(0, 1)$, which can be
performed by standard rejection sampling.  However, for high
dimensional problems (e.g., $d = 10$), the rejection sampling becomes
infeasible due to the high rejection rate.  We instead utilize the
ball point picking algorithm to sample uniformly in
$B(0, 1)$~\cite{barthe2005probabilistic}.  For the boundary penalty
term this amounts to sampling a uniform random vector $S$ on the
sphere $\partial B(0, 1)$. We use the fact that
\[
S = \frac{(X_1,  \ldots,  X_d)}{\sqrt{X_1^2 + \ldots + X_d^2}}
\]
is uniform on $\partial B(0, 1)$ when $X_1, \ldots, X_d$ are
independent identically distributed standard normal random variables.

\begin{figure}[h]
\centering
\includegraphics[width=1.0\linewidth]{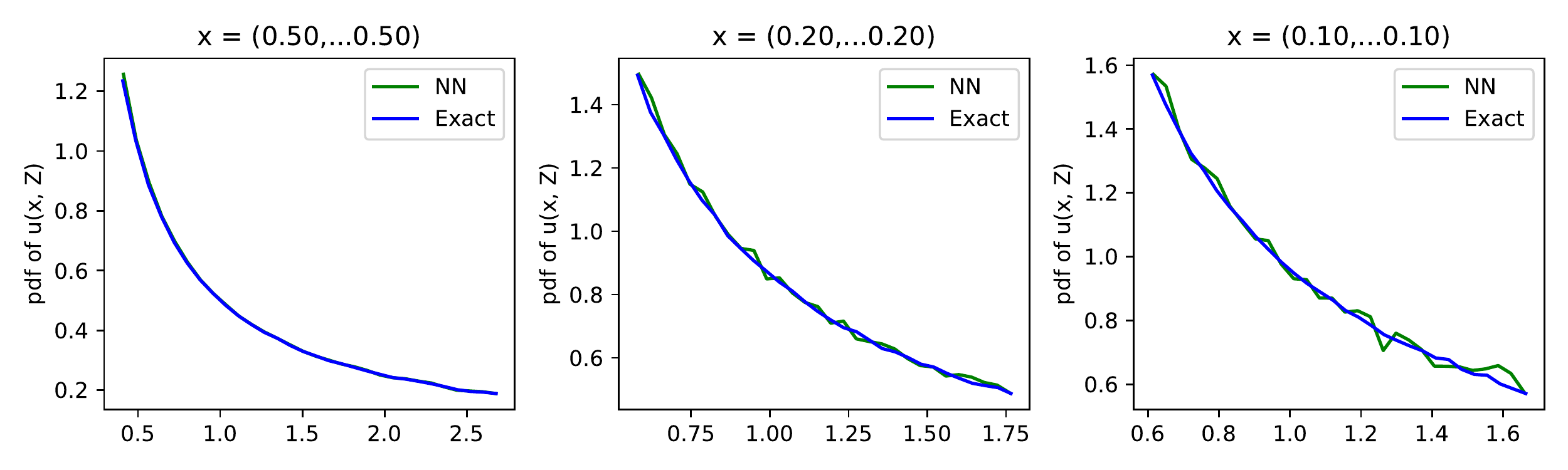}
\caption{Marginal distributions of the solution to the problem in Section~\ref{sec:overdamped} with $d = 4$
and penalty coefficient $\beta = 500$.  Left: the pdf of $u_{\theta}(x, Z)$ at $x = (0.5,  \ldots, 0.5) \in \mathbb{R}^4$; 
Center: the pdf of $u_{\theta}(x, Z)$ at $x = (0.2,  \ldots, 0.2) \in \mathbb{R}^4$; Right: the pdf of $u_{\theta}(x, Z)$ at $x = (0.1,  \ldots, 0.1) \in \mathbb{R}^4$.}
\label{fig:Langevin_4d}
\end{figure}


The numerical experiments are carried out with $d = 4$ and $d = 10$.
In both cases, we employ a $5$-layer fully-connected DNN with
$W = 256$ nodes per layer and we set mini-batch size to be $2560$.
The training of the DNN takes $3 \times 10^5$ iterations with an initial
learning rate $\eta = 10^{-3}$ that is decreased by a factor of $10$
after every $10^5$ iterations.  Finally, we choose the penalty coefficient
$\lambda = 500$ to enforce the boundary condition.  The numerical
results for $d = 4$ and for $d = 10$ are presented in
Figure~\ref{fig:Langevin_4d} and Figure~\ref{fig:Langevin_10d},
respectively, displaying the distribution of the solution
$u_{\theta}(x, Z)$ at different $x$.  The relative $L^2$ mean errors
are $E(\theta) = 0.19 \%$ for $d = 4$ and $E(\theta) = 0.43 \%$ for
$d = 10$.  The experiments, again, suggest that deep learning may
overcome the curse of dimensionality exhibited by traditional
discretization-based methods.
However, we also observe that the solution becomes noisy in the high dimensional setting since the optimization problem becomes more challenging when $d$ is large. 

\begin{figure}[ht!]
\centering
\includegraphics[width=1.0\linewidth]{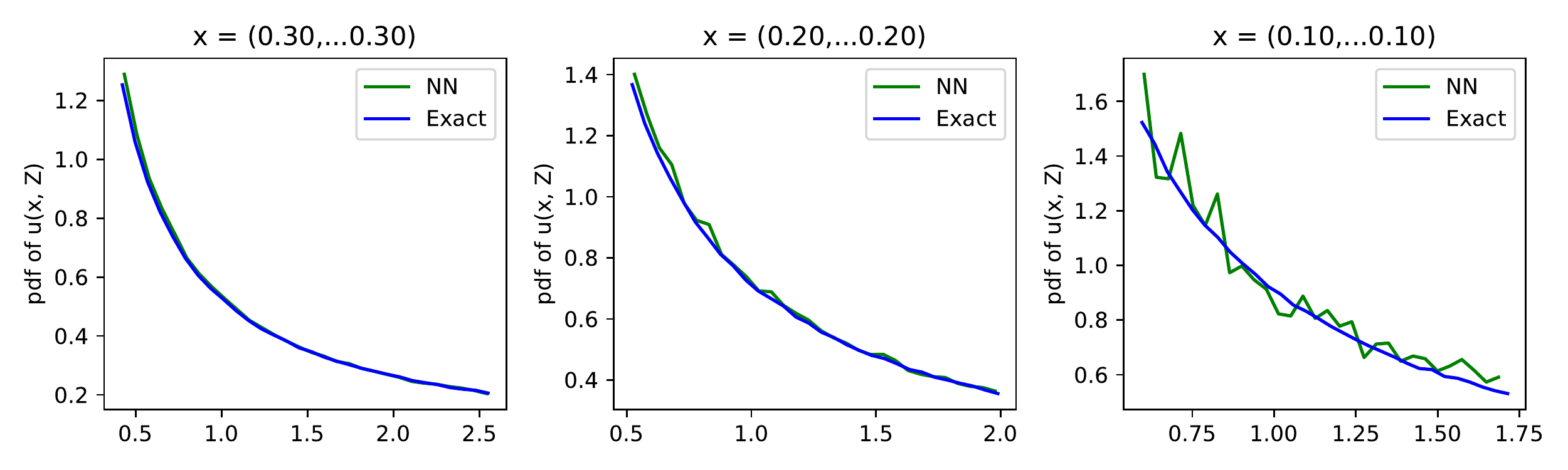}
\caption{Marginal distributions of the solution to the problem in Section~\ref{sec:overdamped} with $d = 10$.  Left: the pdf of $u_{\theta}(x, Z)$ at $x = (0.3,  \ldots, 0.3) \in \mathbb{R}^{10}$; 
Center: the pdf of $u_{\theta}(x, Z)$ at $x = (0.2,  \ldots, 0.2) \in \mathbb{R}^{10}$; Right: the pdf of $u_{\theta}(x, Z)$ at $x = (0.1,  \ldots, 0.1) \in \mathbb{R}^{10}$.}
\label{fig:Langevin_10d}
\end{figure}


\section*{Summary and Conclusion}

In this article, we have presented a DNN-based method for solving
stochastic variational problems through a combination of Monte Carlo
sampling and DNN approximation. As an application, we mainly focused
on the stochastic variational problem involving a quadratic
functional.

Our method differs from traditional approaches for solving stochastic
variational problems.  First and foremost, the utilization of the DNN
approximation and Monte Carlo sampling allows to avoid a
discretization of the physical domain (e.g., finite element and finite
volume) that often incurs an exponential growth of computational cost
with the number of dimensions.  Therefore, the method is applicable to
problems with high dimensions in both physical space and stochastic
space.  Second, the stochastic variational formulation combined with
DNN offers a natural loss function readily optimized by SGD methods.
Moreover, the variational approach is well known to retain certain
structures of the original problem and hence it is particularly
advantageous when applied to structure preserving problems.

Despite the above benefits, our method does suffer from several
limitations.  From the theoretical viewpoint, the lack of convergence
result of our method makes the hyperparameter tuning ad-hoc.  This
issue becomes particularly challenging for high-dimensional problems.
From the practical viewpoint, the boundary condition penalization
incorporated in the loss function alters the ground truth and hence
limits the superior accuracy of the method.  
Lastly, the
Monte Carlo sampling over a general domain $D$ can be quite
challenging as it involves a constrained sampling over $D$ and
$\partial D$. For a complex domain $D$ (e.g., with holes), the
rejection sampling may become infeasible, especially in high
dimension.  In this case, an application of the constrained Markov
chain Monte Carlo may be required~\cite{zhang2020ergodic,
  ciccotti2008projection, lelievre2012langevin, zappa2018monte,
  lelievre2019hybrid}.


\bibliography{stochastic_deep_Ritz}

\appendix

\section{The quadratic Lagrangian setting}\label{sec:appendix}
An important class of problems related to the stochastic variational problem~\eqref{eqn:stochastic-var-form}
is the random coefficient elliptic equation~\eqref{eqn:PDE-finite-noise}.
To recast~\eqref{eqn:PDE-finite-noise} into a stochastic variational problem~\eqref{eq:sochastic_functional_finite_dim},
we first set up an appropriate function space for the problem.  We
introduce the physical space $H_0^1(D)$, i.e., the Sobolev space of
functions with weak derivatives up to order $1$ and vanishing on the
boundary.  To define the function space for $Z$, we assume that $Z$
admits a probability density function $\rho(z)$ so that
\[
\mathbb{E}[g(Z(\omega))] = \int_{\Gamma} g(z) \rho(z)\, dz 
\]
for any measurable function $g: \Gamma \to \mathbb{R}$. 
Then we define the stochastic space $L_{\rho}^2(\Gamma)$, i.e.,
\[
L_{\rho}^2(\Gamma) = \left\{g: \Gamma \to \mathbb{R} ~\left| ~\int_{\Gamma} |g(z)|^2 \rho(z)\, dz  < \infty\right.\right\},
\]
which is the $\rho$-weighted $L^2$ space over $\Gamma$. 
The solution $u(x, Z)$ to \eqref{eqn:PDE-finite-noise} is thus defined in the space 
\begin{equation}\label{eqn:Hilbert-space}
L^2(\Gamma, H_0^1(D))
= \left\{v: D \times \Gamma \to \mathbb{R} \left|~ v \textrm{~is strongly measurable and~}   \|v\|_{H_0^1(D)} \in L_{\rho}^2(\Gamma)  \right.\right\},
\end{equation}
where the norm $\|\cdot\|_{L^2(\Gamma, H_0^1(D))}$ is induced by the 
inner product:
\begin{equation}\label{eqn:inner-product}
\begin{split}
\langle u, v \rangle  
&= \int_{\Gamma}\int_D u(x, z) v(x, z) \rho(z)\, dx \, dz  + \int_{\Gamma}\int_D \nabla u(x, z)  \cdot \nabla v(x, z) \rho(z)\, dx \, dz\\
&= \mathbb{E}\left[\int_D u(x, Z) v(x, Z)\, dx \right]  + \mathbb{E}\left[\int_D \nabla u(x, Z)  \cdot \nabla v(x, Z)\, dx \right].
\end{split}
\end{equation}
Under certain conditions, a direct application of the Lax-Milgram
theorem immediately implies the well-posedness of the following
stochastic weak form of~\eqref{eqn:PDE-finite-noise}:
\begin{equation}\label{eqn:weak-form}
    \mathbb{E}\left[\int_D \kappa(x, Z) \nabla u(x, Z) \cdot \nabla v(x, Z) - f(x, Z) v(x, Z) \, dx\right]=0, \quad \forall v \in L^2(\Gamma, H_0^1(D)).
\end{equation}
The above stochastic weak form is equivalent to a min-max problem
which involves a saddle points searching. Therefore, it is possible to
solve the problem using deep learning frameworks such as the
generative adversarial network (GAN) and its
variants~\cite{goodfellow2014generative, arjovsky2017wasserstein}.
However, it is well known that training of GAN is a difficult task
often requiring fine tuning of hyper-parameters.  The training becomes
even more challenging in the context of PDEs.  We refer the reader
to~\cite{zang2020weak} for a recent work on solving deterministic PDEs
using GAN.

Owing to the difficulty of training GAN models based on the stochastic
weak form, we alternatively reformulate~\eqref{eqn:PDE-finite-noise}
into the following stochastic variational problem
\begin{equation}\label{eqn:Ritz-form-precise-space}
\min_{u \in L^2(\Gamma, H_0^1(D))}  J(u) \quad \textrm{with}\quad J(u) =  \mathbb{E}\left[\int_D \frac{1}{2}\kappa(x, Z) |\nabla u(x, Z)|^2  - f(x, Z) u(x, Z)\, dx\right].
\end{equation}
This stochastic variational problem can be viewed as an extension of
the Ritz formulation (or Dirichlet's principle) to the stochastic
setting. The following theorem provides the theoretical justification
of the stochastic variational reformulation.
\begin{theorem}\label{thm:existence-and-uniqueness}
Under the following two assumptions:
\begin{enumerate}
\item[A1.]
The diffusivity coefficient $\kappa(x, Z)$ is uniformly bounded and uniformly coercive, i.e.,
there exist constants $0 < \kappa_{\min} \leq \kappa_{\max}$ such that for almost all $\omega \in \Omega$, 
\[\mathbb{P}\left(\omega \in \Omega : \kappa_{\textrm{min}} \leq \kappa(x, Z(\omega)) \leq \kappa_{\textrm{max}}, ~\forall x \in \bar{D} \right) = 1.\]
\item[A2.] The forcing $f(x, Z)$ satisfies the following integrability condition:
\[
\mathbb{E}\left[\int_D |f(x, Z)|^2 \, dx\right] < \infty.
\] 
\end{enumerate}
Then the stochastic variational problem~\eqref{eqn:Ritz-form-precise-space} admits a minimizer $u^* \in L^2(\Gamma, H_0^1(D))$.
Furthermore, $u^*$ satisfies the stochastic weak form~\eqref{eqn:weak-form}.
\end{theorem}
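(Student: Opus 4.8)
The plan is to prove the statement by the direct method of the calculus of variations, carried out in the Hilbert space $H := L^2(\Gamma, H_0^1(D))$ of \eqref{eqn:Hilbert-space}--\eqref{eqn:inner-product}, and then to extract the stochastic weak form \eqref{eqn:weak-form} by a first-variation argument. The first step I would record is that the classical Poincar\'e inequality on the bounded Lipschitz domain $D$ holds with a constant $C_D$ depending only on $D$, hence independent of $z \in \Gamma$; applying it fibrewise in $x$ and integrating against $\rho(z)\,dz$ shows that $\|\nabla u\|_{L^2(\Gamma \times D)}$ is an equivalent norm on $H$. Every estimate below rests on this equivalence.

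Second, I would show that $J$ is bounded below and coercive on $H$. Assumption A1 gives $\mathbb{E}\!\left[\int_D \tfrac12 \kappa |\nabla u|^2\,dx\right] \ge \tfrac{\kappa_{\min}}{2}\,\|\nabla u\|_{L^2(\Gamma\times D)}^2$, while assumption A2 together with Cauchy--Schwarz, the Poincar\'e inequality, and Young's inequality yields $\bigl|\mathbb{E}\!\left[\int_D f u\,dx\right]\bigr| \le \varepsilon\,\|\nabla u\|_{L^2(\Gamma\times D)}^2 + C(\varepsilon)\,\mathbb{E}\!\left[\int_D |f|^2\,dx\right]$ for any $\varepsilon > 0$. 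Choosing $\varepsilon = \kappa_{\min}/4$ gives $J(u) \ge \tfrac{\kappa_{\min}}{4}\,\|\nabla u\|_{L^2(\Gamma\times D)}^2 - C$, so $m := \inf_{u \in H} J(u) > -\infty$ and every minimizing sequence $(u_n)$ is bounded in $H$. Since $H$ is a Hilbert space, hence reflexive, I would pass to a weakly convergent subsequence $u_{n_k} \rightharpoonup u^\ast$ in $H$. The functional $J$ splits into the linear, hence weakly continuous, term $-\mathbb{E}\!\left[\int_D f u\,dx\right]$ and the nonnegative quadratic form $u \mapsto \mathbb{E}\!\left[\int_D \tfrac12 \kappa |\nabla u|^2\,dx\right]$, which is convex and norm-continuous on $H$ (continuity from the upper bound $\kappa \le \kappa_{\max}$) and therefore weakly lower semicontinuous; hence $J(u^\ast) \le \liminf_k J(u_{n_k}) = m$, so $u^\ast$ is a minimizer. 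Uniqueness then follows from the strict convexity of the quadratic term, a consequence of $\kappa_{\min} > 0$ and of $\|\nabla\cdot\|_{L^2(\Gamma\times D)}$ being a genuine norm on $H$, via the standard midpoint argument.

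Third, to obtain the weak form, I would note that for any test function $v \in H$ the map $t \mapsto J(u^\ast + t v)$ is a real quadratic polynomial in $t$ whose coefficients are all finite by A1, A2, and Cauchy--Schwarz, so expanding the square requires no interchange of limit and expectation. As this polynomial is minimized at $t = 0$, its derivative there vanishes, which is precisely $\mathbb{E}\!\left[\int_D \kappa\, \nabla u^\ast \cdot \nabla v - f\, v\,dx\right] = 0$ for all $v \in H$, i.e.\ \eqref{eqn:weak-form}.

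I expect the only genuinely delicate point to be the Bochner-space bookkeeping for $H = L^2(\Gamma, H_0^1(D))$: checking that it is a Hilbert space under the inner product \eqref{eqn:inner-product}, that strong measurability is preserved under the weak limit and under the algebraic manipulations above, and that the Poincar\'e constant may be taken uniform in $z$. Once the function space is set up carefully, the remaining steps are routine. As an equivalent alternative, one may bypass the direct method entirely and apply the Lax--Milgram theorem to the symmetric, bounded (by $\kappa_{\max}$), and coercive (by $\kappa_{\min}$ and Poincar\'e) bilinear form $a(u,v) = \mathbb{E}\!\left[\int_D \kappa\, \nabla u \cdot \nabla v\,dx\right]$ and the bounded linear functional $\ell(v) = \mathbb{E}\!\left[\int_D f\,v\,dx\right]$ (bounded by A2); the resulting unique solution of $a(u^\ast, v) = \ell(v)$ is then identified as the unique minimizer of $J = \tfrac12 a(\cdot,\cdot) - \ell$ using the positive-definiteness of $a$.
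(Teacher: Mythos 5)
Your proposal is correct and follows essentially the same route as the paper: the direct method (coercivity of $J$ from A1, A2 and Poincar\'e, hence a bounded minimizing sequence with a weakly convergent subsequence, then weak lower semicontinuity) followed by a vanishing first variation to obtain \eqref{eqn:weak-form}. The only substantive difference is the lower-semicontinuity step, where the paper integrates the pointwise convexity (subgradient) inequality of the Lagrangian $L_z(x,u,\xi)=\tfrac{1}{2}\kappa|\xi|^2+fu$ whereas you invoke the abstract fact that a norm-continuous convex quadratic form plus a weakly continuous linear term is weakly lower semicontinuous; your Young's-inequality coercivity bound, the uniqueness claim, and the Lax--Milgram alternative are correct additions not present in the paper's argument.
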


\begin{proof}
To show the existence of a minimizer $u^*$ to~\eqref{eqn:Ritz-form-precise-space}, it is sufficient to show that 1) there exists a minimizing sequence $u_n(x, Z)$ (up to a subsequence) that converges weakly to some $u^* \in L^2(\Gamma, H_0^1(D))$ and 2) the functional $J(u)$ is (weakly) lower semicontinuous~\cite{dacorogna2007direct}. 

Denote $J^* = \inf \{J(u) ~|~ u \in L^2(\Gamma, H_0^1(D))\}$. 
Note that $J^* > -\infty$ and hence there exists a minimizing sequence $u_n$ such that $J(u_n) \to J^*$. We shall show that there exists $u^* \in L^2(\Gamma, H_0^1(D))$ such that $J^* = J(u^*)$.

For weak convergence of the sequence $u_n(x, Z)$, it suffices to uniformly bound $u_n$ in $L^2(\Gamma, H_0^1(D))$.
To this end, note that by the assumption on $\kappa$, 
\[
\frac{1}{2}\kappa(x, Z) |\nabla u_n(x, Z)|^2 - f(x, Z) u_n(x, Z) \geq \frac{1}{2} \kappa_{\text{min}}|\nabla u_n(x, Z)|^2 - f(x, Z) u_n(x, Z).
\]
Integrating over $D$ and taking expectation of both sides lead to
\[
J(u_n) \geq \frac{1}{2} \kappa_{\text{min}}\mathbb{E}\left[\int_D |\nabla u_n(x, Z)|^2 \, dx  \right] - \mathbb{E}\left[\int_D f(x, Z) u_n(x, Z) \, dx\right].
\]
Invoking the assumption on $\kappa$ and the Cauchy-Schwarz inequality leads to
\[
J(u_n) \geq C_1\mathbb{E}\left[||\nabla u_n(\cdot, Z)||_{L^2(D)}^2\right] - \mathbb{E}\left[\| f(\cdot, Z)\|_{L^2(D)}^2\right]^{\frac{1}{2}} \mathbb{E}\left[\|u_n(\cdot, Z)\|_{L^2(D)}^2 \right]^{\frac{1}{2}},
\]
where $C_1 = \kappa_{\min}/2$.
By the assumption on $f$ and the Poincar\'{e}'s inequality, there exists a constant $C_2 > 0$ (which depends on $f$ and the domain $D$) such that
\[
J(u_n) \leq C_1\mathbb{E}\left[||\nabla u_n(\cdot, Z)||_{L^2(D)}^2\right] 
- C_2  \mathbb{E}\left[\|\nabla u_n(\cdot, Z)\|_{L^2(D)}^2 \right]^{\frac{1}{2}}.
\]
Now note that $J(u_n)$ is uniformly bounded (in $n$) since it is a minimizing sequence of~\eqref{eqn:Ritz-form-precise-space}, which implies 
$\mathbb{E}\{\|\nabla u_n(\cdot, Z)\|_{L^2(D)}^2 \}$
is uniformly bounded (in $n$) and hence 
\[\sup_n\mathbb{E}\{\|u_n(\cdot, Z)\|_{H_0^1(D)}^2 \} < \infty.\] 
Therefore, there exists a subsequence, still denoted by $u_n$, that converges weakly to some $u^* \in L^2(\Gamma, H_0^1(D))$.

Next, we justify that $J(u)$ is (weakly) lower semicontinuous. 
For each $z \in \mathbb{R}^K$,
define the functional $L_z: D \times \mathbb{R} \times \mathbb{R}^n \to \mathbb{R}$ by 
\[L_z(x, u, \xi) = \frac{1}{2}\kappa(x, z) |\xi|^2 + f(x, z) u.\]
Since $(u, \xi) \mapsto L_z(x, u, \xi)$ is convex for every $x$ and $z$, 
we have for all $x \in D$ and almost all $\omega \in \Omega$,  
\begin{equation*}
\begin{split}
L_{Z(\omega)}(x, u_n, \nabla u_n) \geq L_{Z(\omega)}(x, u^*, \nabla u^*) &+ \partial_u L_{Z(\omega)}(x, u^*, \nabla u^*) (u_n - u^*) \\
&+ \partial_{\xi} L_{Z(\omega)}(x, u^*, \nabla u^*) \cdot (\nabla u_n - \nabla u^*).
\end{split}
\end{equation*}
Integrating both sides of the above inequality and then taking expectations lead to 
\[
J(u_n) \geq J(u^*) + \mathbb{E}\left[\int_D f(x, Z) (u_n - u^*)\, dx\right]
+
\mathbb{E}\left[\int_D \kappa(x, Z) \nabla u^* \cdot (\nabla u_n - \nabla u^*)\, dx\right].
\]
Since $u_n$ converges weakly to $u^*$ in $L^2(\Gamma, H_0^1(D))$ and both $f(x, Z)$ and $\kappa(x, Z) \nabla u^*$ are in $L^2(\Gamma, H_0^1(D))$, we immediately have
\[
\liminf_{n\to \infty} J(u_n) \geq J(u^*).
\]
Therefore, $J(u)$ is (sequentially weak) lower semicontinuous and hence 
$u^*$ is a minimizer by the direct method of calculus of variation. 
It remains to be shown that $u^*$ satisfies the weak form~\eqref{eqn:weak-form}.
To this end, we consider the functional $J$ evaluated at $u^* + \epsilon v$ for every $v \in L^2(\Gamma, H_0^1(D))$. 
A simple calculation shows that the Gateaux derivative satisfies
\begin{equation*}
\begin{split}
\lim_{\epsilon \to 0}
\frac{1}{\epsilon}(J(u^* + \epsilon v) - J(u^*))
=
\mathbb{E}\left[\int_D \kappa(x, Z) \nabla u^* \cdot \nabla v - f(x, Z) v \, dx\right].
\end{split}
\end{equation*}
Since $u^*$ is a
minimizer of $J$, the Gateaux derivative
\[
\left.\frac{d}{d\epsilon}\right|_{\epsilon = 0}J(u^* + \epsilon v) = 0,
\]
and hence the weak form~\eqref{eqn:weak-form}.
\end{proof}




%

\end{document}